\newtheorem{theorem}{Theorem}
\theoremstyle{definition}
\newtheorem{remark}[theorem]{Remark}
\newtheorem{example}[theorem]{Example}
\newcommand{\vA}{\mathbf{A}}
\newcommand{\vf}{\mathbf{f}}
\newcommand{\vzero}{\mathbf{0}}
\newcommand{\vbeta}{\boldsymbol{\beta}}
\newcommand{\vgamma}{\boldsymbol{\gamma}}
\newcommand{\T}{\mathrm{T}}
\title{Differentiating a Linear Recursive Sequence}
\author[1]{D{\'a}vid Papp\thanks{Email: dpapp@ncsu.edu, ORCID: 0000-0003-4498-6417}}
\author[2]{Kolos Csaba {\'A}goston\thanks{Email: kolos.agoston@uni-corvinus.hu, ORCID: 0000-0002-6738-0592}}
\affil[1]{North Carolina State University, Raleigh, NC, USA}
\affil[2]{Corvinus University of Budapest, Budapest, Hungary}
\begin{document}



\maketitle

\begin{abstract}
Consider a sequence of real-valued functions of a real variable given by a homogeneous linear recursion with differentiable coefficients. We show that if the functions in the sequence are differentiable, then the sequence of derivatives also satisfies a homogeneous linear recursion whose order is at most double the order of original recursion. Similarly to the well-known operations that determine the elementwise sum and product of two linear recursive sequences, the coefficient functions of our recursion for the derivatives are easily computable from the original coefficient functions and their derivatives by direct manipulation of the coefficients of the characteristic polynomial of the recursion, without determining the roots. A simple application, computing linear recursions for derivatives of orthogonal polynomials, is presented.
\end{abstract}

\section{Introduction: the ``calculus'' of linear recurrence relations}
A \emph{homogeneous linear recurrence relation} of \emph{order} $k$ is a relation of the form
\begin{equation}\label{eq:def}
a_n = \alpha_1 a_{n-1} + \alpha_2 a_{n-2} + \dots + \alpha_k a_{n-k}\;\; (n\geq k);
\end{equation}
the \emph{coefficients} $\alpha_1,\dots, \alpha_k$ along with the \emph{initial values} $a_0,\dots,a_{k-1}$ determine the entire sequence $(a_i)_{i=1,2,\dots}$. Particularly important for this note is the case when the coefficients and the initial values are real-valued functions, rather than real numbers. For example, \emph{Chebyshev polynomials of the first kind} can be defined by the recursion
\begin{equation}\label{eq:ChebyshevT}
T_0(x) = 1, \quad T_1(x) = x, \quad T_n(x) = 2x T_{n-1}(x) - T_{n-2}(x)\;\;(n \geq 2).
\end{equation}

The \emph{characteristic polynomial} associated with the relation \eqref{eq:def} is the degree-$k$ univariate polynomial
\begin{equation*}
p(t) = t^k - \alpha_1 t^{k-1} - \alpha_2 t^{k-2} - \dots - \alpha_{k-1}t - \alpha_k.
\end{equation*}

Linear recurrence relations are ubiquitous, from the Fibonacci sequence to sequences of orthogonal polynomials. Hence, it is no surprise that there is a well-developed ``calculus'' to work with them, even in cases when the well-known formula for the general solution of \eqref{eq:def} involving the powers of the roots of the characteristic polynomial (see, e.g., \cite[E.q.~(1.4)]{EverestEtal2003}) is impractical. For example, it is quite well-known that given two linear recurrence relations
\begin{align*}
a_n &= \alpha_1 a_{n-1} + \dots + \alpha_k a_{n-k}\;\; (n\geq k) \text{ and }\\
b_n &= \beta_1 b_{n-1} + \dots + \beta_\ell b_{n-\ell}\;\; (n\geq\ell),
\end{align*}
we can easily compute the coefficients of a linear recurrence relation satisfied by the sums $(a_n+b_n)$, and same is true for the product sequence $(a_nb_n)$ \cite[Chap.~4]{EverestEtal2003}. The ``sum rule'' is especially simple: if $p_a$ and $p_b$ denote the characteristic polynomials of $(a_n)$ and $(b_n)$, respectively, then $(a_n+b_n)$ satisfies the recurrence whose characteristic polynomial is $p_a p_b$. When $p_a$ and $p_b$ share at least one root, the result can be further sharpened to the least common multiple instead of the product, $\operatorname{lcm}(p_a,p_b)$.

Note that the coefficients of the polynomials $p_a p_b$ and $\operatorname{lcm}(p_a,p_b)$ can be computed directly from the coefficients of $p_a$ and $p_b$; the proof of the sum rule does involve the roots of the characteristic polynomial, but the end result does not.

The ``product rule'' for linear recurrence relations is more complicated, but the end result is of the same flavor: $(a_nb_n)$ satisfies a linear recurrence relation of order no greater than $k\ell$, and the coefficients of such a relation can be computed as suitable polynomials of the original coefficients, expressed in terms of a resultant.
This, along with the state-of-the-art on elementwise (or Hadamard) operations on linear recurrence sequences in general, is presented in \cite[Chap.~4]{EverestEtal2003}).

In the remainder of this note, we show that a similar result holds for the sequence of derivatives when the recurrence \eqref{eq:def} defines a sequence of differentiable functions. Although both the result and our proof are elementary (the main challenge is to avoid any reference to the roots of the characteristic polynomial, which in this setting are not necessarily differentiable functions), we were surprised that we could not find any reference to them in the literature.

\section{A linear recurrence for the sequence of derivatives}

Throughout this section, we consider linear recurrence relations of the form \eqref{eq:def} in which the coefficients $\alpha_i$ are differentiable functions of a variable $x$. All derivatives of every function refer to derivatives with respect to $x$. For example, in the statement of \mbox{Theorem \ref{thm:main}} below, $p' = \partial p/\partial x$ is the polynomial we obtain from $p$ by differentiating each coefficient with respect to $x$, rather than the polynomial variable $t$. Additionally, $\gcd$ denotes the greatest common divisor of polynomials. We are now ready to state our main result.

\begin{theorem}\label{thm:main}
Suppose that the sequence of functions $f_0, f_1, \dots$ is defined by a linear recurrence relation of order $k$ whose characteristic polynomial is $p$, and assume that both the initial values $f_0,\dots,f_{k-1}$ and the coefficients in the recursion are differentiable. Then each $f_i$ is differentiable, and the sequence of derivatives $f'_0, f'_1, \dots$ satisfies the linear recurrence relation whose characteristic polynomial is $p^2 / \gcd(p,p')$.
\end{theorem}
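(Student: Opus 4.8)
The plan is to encode the recurrence as a single operator identity and then differentiate it. Let $E$ be the shift operator $(Ea)_n = a_{n+1}$ on sequences $a = (a_0, a_1, \dots)$ of functions of $x$, and for any polynomial $q(t) = t^m - \beta_1 t^{m-1} - \dots - \beta_m$ let $q(E) = E^m - \beta_1 E^{m-1} - \dots - \beta_m$, where each coefficient acts by pointwise multiplication. Then the recurrence \eqref{eq:def} is exactly the statement $p(E)\vf = \vzero$ for $\vf = (f_0, f_1, \dots)$. The structural fact I will use repeatedly is that $E$ commutes with multiplication by any function of $x$, because shifting the index $n$ and multiplying by a function of $x$ are independent operations; this is where it matters that the coefficients depend on $x$ and not on $n$. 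Consequently $q \mapsto q(E)$ maps the polynomial ring over a suitable field of functions of $x$ into a \emph{commutative} ring of operators, so any two operators $q_1(E), q_2(E)$ commute and $(q_1 q_2)(E) = q_1(E)\,q_2(E)$.

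First I would dispatch the differentiability claim: each $f_i$ is differentiable by a trivial induction, since $f_n$ is a finite sum of products $\alpha_j f_{n-j}$ of differentiable functions. The heart of the proof is then one computation. Differentiating $p(E)\vf = \vzero$ entrywise in $x$ and applying the product rule to each term $\alpha_j f_{n-j}$, the terms where the derivative lands on an $f$ reassemble into $p(E)\vf'$, and the terms where it lands on a coefficient reassemble into $p'(E)\vf$, with $p'$ the coefficientwise derivative of $p$ in $x$. This produces the key relation
\begin{equation*}
p(E)\,\vf' = -\,p'(E)\,\vf .
\end{equation*}

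Applying $p(E)$ to this relation and using $p(E)\vf = \vzero$ together with commutativity gives $p(E)^2\vf' = -p'(E)\,p(E)\,\vf = \vzero$, already showing that $\vf'$ satisfies the order-$2k$ recurrence with characteristic polynomial $p^2$, the crude bound promised in the abstract. To sharpen $p^2$ to $p^2/\gcd(p,p')$, set $g = \gcd(p,p')$ and write $p = gu$ and $p' = gv$. Applying $u(E)$ to the key relation and commuting factors yields
\begin{equation*}
g(E)\,u(E)^2\,\vf' = -\,v(E)\,p(E)\,\vf = \vzero ,
\end{equation*}
the last equality because $p(E)\vf = \vzero$. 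Since $gu^2 = p^2/g$, this is exactly the claim that $\vf'$ obeys the recurrence with characteristic polynomial $p^2/\gcd(p,p')$. The same computation with \emph{any} common divisor $g$ of $p$ and $p'$ produces the annihilator $gu^2 = p^2/g$; taking $g = \gcd(p,p')$ minimizes its degree, which explains why the gcd appears.

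The step I expect to demand the most care is not the operator algebra but the meaning of $\gcd(p,p')$ once the coefficients are functions. Computing it via the Euclidean algorithm forces me to work over a field, say the field of fractions of the ring generated by the $\alpha_i$ and $\alpha_i'$, so the monic gcd $g$ and the quotients $u, v$ acquire coefficients that are rational, hence possibly pole-bearing, functions of $x$, and $\deg g$ may drop at isolated values of $x$ where factors of $p$ and $p'$ collide. I would therefore derive the operator identity over this field and conclude that the recurrence for $\vf'$ holds at every $x$ where its coefficients are defined, while separately verifying the two bookkeeping facts that make the statement well-posed: that $g \mid p^2$, so $p^2/g$ is a genuine polynomial, and that $p^2/g$ is monic of degree $2k - \deg g \le 2k$, so it is the characteristic polynomial of a bona fide recurrence of order at most $2k$.
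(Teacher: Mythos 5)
Your proof is correct and is essentially the paper's argument in shift-operator clothing: the identity $p(E)\vf'=-p'(E)\vf$ and the subsequent multiplication by $u(E)=(p/\gcd(p,p'))(E)$ is exactly the paper's linear combination of the original and differentiated recursions with polynomial weights $\beta=-p'/\gcd(p,p')$ and $\gamma=p/\gcd(p,p')$, the commutativity of your operator ring playing the role of the paper's observation that the elimination condition is the polynomial equation $\beta p=-\gamma p'$. The operator formalism packages the bookkeeping more compactly, but the key idea and the resulting characteristic polynomial $p^2/\gcd(p,p')$ are the same.
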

\begin{proof}
The differentiability of $f_k, f_{k+1}, \dots$ is immediate. Let the homogeneous linear recurrence relation defining our sequence be
\begin{equation}\label{eq:fn-rec}
f_n = \sum_{\ell=1}^k \alpha_\ell f_{n-\ell} \quad (n\geq k).
\end{equation}
Introducing the notation $\alpha_0(\cdot) := -1$, we can now write
\begin{equation}\label{eq:f}
\sum_{\ell=0}^k \alpha_\ell f_{n-i-\ell} = 0 \qquad (i=0,\dots,k)
\end{equation}
for every $n\geq 2k$ by substituting $f_{n-i}$ for $f_n$ in the original recursion \eqref{eq:fn-rec}. Differentiating both sides of this equation, we also get the system of equations
\begin{equation}\label{eq:f'}
\sum_{\ell=0}^k \alpha'_\ell f_{n-i-\ell} + \alpha_\ell f'_{n-i-\ell} = 0 \qquad (i=0,\dots,k).
\end{equation}
Note that all subscripts are nonnegative as long as $n \geq 2k$.

Our goal is to find a linear combination of these equations from which every $f_{n-i}$ term cancels out, leaving only a homogeneous linear recursion on the derivatives, $f'_{n-i}$.

Let $\vA$ be the ${(k+1)\times(2k+1)}$ matrix
\begin{equation*}
\vA =
\begin{pmatrix}
\alpha_0 & \alpha_1 & \cdots & \alpha_k \\
& \alpha_0 & \cdots & \alpha_{k-1} & \alpha_k \\
&&\ddots & \ddots & \ddots & \ddots\\
&&&\alpha_0 & \alpha_1 & \cdots & \alpha_k
\end{pmatrix},
\end{equation*}
let $\vf$ be the $(2k+1)$-vector $(f_n,\dots,f_{n-2k})^\T$,
and let $\vA'$ and $\vf'$ denote the matrix and vector we obtain by differentiating each entry of $\vA$ and $\vf$, respectively. (To keep the notation light, we continue to omit the variable $x$ of the functions $f_i$ and $\alpha_i$, but we shall keep in mind throughout that $\vA$ and $\vf$ are a matrix and a vector of differentiable functions, and all derivatives refer to differentiation with respect to this variable.) With this notation the system of equations \eqref{eq:f}-\eqref{eq:f'} can be written as
\begin{equation*}
\begin{pmatrix}
\vA  & \vzero\\
\vA' & \vA 
\end{pmatrix}
\begin{pmatrix}
\vf\\
\vf' 
\end{pmatrix} = \vzero.
\end{equation*}
Consider now the following general linear combination of these equations:
\begin{itemize}
\item multiply Eq.~\eqref{eq:f} by $\beta_i$ for each $i\in\{0,\dots,k\}$,
\item multiply Eq.~\eqref{eq:f'} by $\gamma_i$ for each $i\in\{0,\dots,k\}$, and
\item add the resulting $2k+2$ equations.
\end{itemize}
This yields an identity with no $f_{n-i}$ terms precisely when
\begin{equation}\label{eq:nullspace}
\vbeta^\T\vA = \vgamma^\T\vA',
\end{equation}
wherein $\vbeta=(\beta_0,\dots,\beta_k)$ and $\vgamma=(\gamma_0,\dots,\gamma_k)$, and in this case the resulting identity is simply
\begin{equation}\label{eq:f'-matrix}
\vgamma^\T\vA\vf' = 0.
\end{equation}

There is yet another way in which we can write these equations. If we treat the variable vectors $\vbeta$ and $\vgamma$ as coefficient vectors for the polynomials
\[ \beta(t) = \beta_0 t^k + \beta_1 t^{k-1} + \dots + \beta_k \]
and
\[ \gamma(t) = \gamma_0 t^k + \gamma_1 t^{k-1} + \dots + \gamma_k \]
in the same fashion as how the characteristic polynomial $p$ is associated with the vector $(\alpha_0,\dots,\alpha_k)$,
we notice that \eqref{eq:nullspace} can be written as an equation between two polynomials:
\begin{equation}\label{eq:nullspace-poly}
\beta p = -\gamma p',
\end{equation}
since the $i$th equation of the system \eqref{eq:nullspace} (indexing them from $0$ to $2k$) is either
\[
\sum_{\ell=0}^i \beta_\ell \alpha_{i-\ell} = \sum_{\ell=0}^i \gamma_\ell \alpha'_{i-\ell}
\qquad \text{if } i\in\{0,\dots,k\}
\]
or
\[
\sum_{\ell=i-k}^k \beta_{\ell} \alpha_{i-\ell} = 
\sum_{\ell=i-k}^k \gamma_{\ell} \alpha'_{i-\ell} 
\qquad \text{if } i\in\{k, \dots, 2k\}.
\]
In either case, the two sides of the equation are the coefficients of $t^i$ in the polynomials $\beta p(t)$ on the left-hand side and $\gamma p'(t)$ on the right, respectively. Therefore \eqref{eq:nullspace} is equivalent to the polynomial equation $\beta p(t) = \gamma p'(t)$ for every $t$.

In the same vein, the characteristic polynomial of the recursion \eqref{eq:f'-matrix} on the derivatives is simply $\gamma p$.

The functional equation \eqref{eq:nullspace-poly} has a trivial non-zero solution: $(\beta,\gamma) = (-p',p)$, which leads to the characteristic polynomial $\gamma p = p^2$. That is, $p^2$ is the characteristic polynomial of a linear recurrence relation satisfied by the sequence of derivatives. This relation is of order $2k$, but we may be able to do slightly better. By the unique factorization theorem, \eqref{eq:nullspace-poly} will not have any lower-degree (nonzero) solutions unless $p$ and $p'$ have a common factor. However, if they do have a common factor $q$, then we have an additional solution, since we can also choose $(\beta,\gamma) = (-p'/q,p/q)$, which in turn yields the characteristic polynomial $\gamma p = p^2/q$ for the recursion of the derivatives.

To get the recurrence of the lowest order, we can choose $q$ to be $\gcd(p,p')$, which yields the characteristic polynomial $p^2/\gcd(p,p')$ for a recursion satisfied by the derivatives, as claimed.
\end{proof}

\begin{remark}\label{rem:initial}
The obtained recurrence relation on the derivatives depends only on the coefficients in the original recurrence, but not on the initial values. Furthermore, similarly to the sum and product rules, the new coefficient functions are easily computable from the original coefficient functions, without having to compute the roots of $p$. Recall that the greatest common divisor of two univariate polynomials can be computed using the Euclidean Algorithm \cite[Sec.1.5]{CLO2007}.
\end{remark}

\begin{example}[Chebyshev polynomials]
Let us return to the Chebsyhev polynomials of the first kind given in Eq.~\eqref{eq:ChebyshevT} and their counterparts, Chebyshev polynomials of the \emph{second kind}, defined by the recursion
\begin{equation}\label{eq:ChebyshevU}
U_0(x) = 1, \quad U_1(x) = 2x, \quad U_n(x) = 2x U_{n-1}(x) - U_{n-2}(x)\;\;(n \geq 2).
\end{equation}
Note that the recurrence relation is the same for both $(T_n)$ and $(U_n)$, only their initial values differ. It is well-known (see, e.g., \cite[Sec.~2.4.5]{MasonHandscomb2002}) that their derivatives are closely related, namely
\begin{equation}\label{eq:T'U'}
T'_n(x) = n U_{n-1}(x) \text{ and } U'_n(x) = \frac{(n+1)T_{n+1}(x)-xU_n(x)}{x^2-1}.
\end{equation}
Since $a_n=n$ satisfies a homogeneous linear recursion of order $2$, we could in principle determine a linear recursion for $(T'_n) = (n U_{n-1})$ using the product rule mentioned in Section 1, although the calculations are quite involved. To derive a recurrence relation for $(U'_n)$ from Eq.~\eqref{eq:T'U'}, we would need both the sum and the product rules of \mbox{Section 1}, and we would obtain a relation of considerably higher order.

Instead, we can obtain the simplest recurrence relations (which is the same relation for both sequences according to Remark~\ref{rem:initial}) directly from Theorem~\ref{thm:main}. The characteristic polynomial of the recurrence relations in \eqref{eq:ChebyshevT} and \eqref{eq:ChebyshevU} is
\[ p(t) = t^2 - 2x t + 1. \]
The derivative of the characteristic polynomial is $p'(t)=-2t$ (recall that the prime superscript represents differentiation with respect to $x$), which has no non-constant common divisor with $p$. The squared characteristic polynomial is
\[p^2(t) = t^4 - 4x t^3 + (2 + 4 x^2)t^2  - 4x t + 1,\] therefore both sequences of derivatives, $(T_n')$ and $(U_n')$ satisfy the linear recurrence relation
\[ f_n(x) = 4x f_{n-1}(x) - (2+4x^2) f_{n-2}(x) + 4x f_{n-3}(x) - f_{n-4}(x)\quad (n\geq 4).\]
\end{example}

\begin{example}[Second derivative sequences]
The sequence $(x^n)_{n=0,1,\dots}$ satisfies a homogeneous first-order recurrence with $\alpha_1(x) = x$; its characteristic polynomial is $p(t) = t-x$. Its derivative sequence $(n x^{n-1})_{n=0,1,\dots}$ therefore satisfies the second-order recursion whose characteristic polynomial is $p^2(t) = (t-x)^2$. Because this polynomial has a double root, the second derivative sequence $(n(n-1) x^{n-2})_{n=0,1,\dots}$ satisfies not only a fourth-order, but a third-order recursion, whose characteristic polynomial is $(t-x)^3$.

Generally, if the characteristic polynomial $p$ of a sequence is of degree $k$ and has twice differentiable coefficients, then the sequence of first derivatives satisfies the linear recursion of order $2k$ whose characteristic polynomial is $p^2$, and the order of this recursion may be the lowest possible, as $p$ and $p'$ may not have a non-constant common divisor. However, the second derivatives satisfy a linear recursion of order at most $3k$, since $p$ is always a common divisor of $p^2$ and $(p^2)' = 2pp'$.
\end{example}

\begin{example}
The numerator $\gcd(p,p')$ may be non-constant even if $p$ does not have a ``generic'' double root, but has a constant (with respect to $x$) root, as the example of the recurrence relation
\[f_n = (x+1) f_{n-1}-x f_{n-2} \quad (n\geq 2)\]
shows: the characteristic polynomial is
\[p(t) = t^2 - (x+1) t + x = (t-x)(t-1), \]
which (for a generic $x$) has single roots. Yet, $p'(t) = -(t-1)$, and $\gcd(p,p') = (t-1)$, meaning that the derivative sequence satisfies a linear recurrence relation of $3$ rather than $4$:
\[ f'_n = (2x+1)f'_{n-1} - x(x+2) f'_{n-2} + x^2 f'_{n-3} \quad (n\geq 3). \]
\end{example}

\section*{Acknowledgments}
Research was done while D{\'a}vid Papp was visiting the Corvinus Institute of Advanced Studies, Corvinus University, Budapest, Hungary.

The authors are indebted to J{\'a}nos T{\'o}th (Budapest University of Technology) for helpful discussions and his comments on an early version of this manuscript.

Funding (D{\'a}vid Papp): This material is based upon work supported by the National Science Foundation under Grant No. DMS-1847865. This material is based upon work supported by the Air Force Office of Scientific Research under award number FA9550-23-1-0370. Any opinions, findings and conclusions or recommendations expressed in this material are those of the author(s) and do not necessarily reflect the views of the U.S. Department of Defense.

Funding (Kolos Csaba \'Agoston): Kolos Csaba \'Agoston's research was supported by the National Research, Development and Innovation Office under Grant FK 145838.

\bibliographystyle{vancouver}
\bibliography{references}


\vfill\eject

\end{document}